\newtheorem{thm}{Theorem}
\newtheorem{lem}{Lemma}
\begin{document}

\title[A proof of the conjecture of Cohen and Mullen]{A proof of the conjecture of Cohen and Mullen on sums of primitive roots}

\author{Stephen D. Cohen}
\address{School of Mathematics and Statistics, University of Glasgow, Scotland}
\email{Stephen.Cohen@glasgow.ac.uk}
\curraddr{}
\thanks{}

\author{Tom\'{a}s Oliveira e Silva}
\address{Departamento de Electr{\'o}nica, Telecomunica{\c c}{\~o}es e Inform{\'a}tica / IEETA,
  Universidade de Aveiro, Portugal}
\email{tos@ua.pt}
\urladdr{http://www.ieeta.pt/~tos}
\curraddr{}
\thanks{}

\author{Tim Trudgian}
\address{Mathematical Sciences Institute, The Australian National University, ACT 0200, Australia}
\email{timothy.trudgian@anu.edu.au}
\curraddr{}
\thanks{Supported by Australian Research Council DECRA Grant DE120100173}

\subjclass[2010]{Primary 11T30, 11Y99}

\keywords{Finite fields, primitive roots}

\dedicatory{}

\begin{abstract}
  We prove that for all $q>61$, every non-zero element in the finite field $\mathbb{F}_{q}$ can be
  written as a linear combination of two primitive roots of $\mathbb{F}_{q}$. This resolves a
  conjecture posed by Cohen and Mullen.
\end{abstract}

\maketitle

\else
\documentclass[11pt]{article}
\usepackage{a4wide}

\usepackage{booktabs}
\usepackage[ruled,vlined,linesnumbered]{algorithm2e}

\usepackage{amsthm}
\usepackage{amsmath}
\usepackage{amssymb}
\newtheorem{thm}{Theorem}
\newtheorem{lem}{Lemma}

\title{A proof of the conjecture of Cohen and Mullen on sums of primitive roots}

\author{
  Stephen D. Cohen \\
  School of Mathematics and Statistics, \\
  University of Glasgow, Scotland \\
  Stephen.Cohen@glasgow.ac.uk
\and
  Tom\'{a}s Oliveira e Silva \\
  Departamento de Electr{\'o}nica, Telecomunica{\c c}{\~o}es e Inform{\'a}tica / IEETA \\
  University of Aveiro, Portugal \\
  tos@ua.pt
\and
  Tim Trudgian\footnote{Supported by Australian Research Council DECRA Grant DE120100173.} \\
  Mathematical Sciences Institute \\
  The Australian National University, ACT 0200, Australia \\
  timothy.trudgian@anu.edu.au
}
\date{February 9, 2014}

\begin{document}

\maketitle

\begin{abstract}
  \noindent
  We prove that for all $q>61$, every non-zero element in the finite field $\mathbb{F}_{q}$ can be
  written as a linear combination of two primitive roots of $\mathbb{F}_{q}$. This resolves a
  conjecture posed by Cohen and Mullen.
\end{abstract}

\textit{AMS Codes: 11T30, 11Y99}

\fi

\section{Introduction}

For $q$ a prime power, let $\mathbb{F}_{q}$ denote the finite field of order~$q$, and let
$g_{1}$, $g_{2}$, $\ldots\,$, $g_{\phi(q-1)}$ denote the primitive roots of~$q$. Various questions
have been asked about whether non-zero elements of $\mathbb{F}_{q}$ can be written as a linear sum
of two primitive roots, $g_{1}$ and $g_{2}$. To develop this idea, let $a$, $b$ and $c$ be
arbitrary non-zero elements in~$\mathbb{F}_{q}$. Is there some $q_{0}$ such that there is always
one representation
\begin{equation}\label{e:vc}
  a = bg_{n} + c g_{m}
\end{equation}
for all $q>q_{0}$? Since such a representation is possible if and only if $a/b=g_{n}+c/b\,g_{m}$,
we may suppose that $b=1$ in~(\ref{e:vc}).
Accordingly, define $\mathcal {G}$ to be the set of prime powers $q$ such that for all non-zero
$a, c \in \mathbb{F}_{q}$ there exists a primitive root $g \in \mathbb{F}_{q}$ such that $a-cg$ is
also a primitive root of $\mathbb{F}_{q}$.

It appears that Vegh~\cite{Vegh} was the first to consider a specific form of~(\ref{e:vc}),
namely, that with $b=1$ and $c=-1$. This has been referred to as Vegh's Conjecture ---
see~\cite[\S F9]{Guybook}. Vegh verified his own conjecture for $61 < q <2000$;
Szalay~\cite{Szalay} proved it for $q>q_{0}$ and claimed that one could take $q_{0}=10^{19}$. In
the special case when $a=1$, Cohen~\cite{Cohen} proved Vegh's conjecture for all~$q>7$.

Golomb~\cite{Golomb} proposed~(\ref{e:vc}) with $b=1$ and~$c=1$. This has applications to Costas
arrays, which appear in the study of radar and sonar signals. This was proved by Sun~\cite{Sun}
for $q>2^{60}\approx 1.15\times 10^{18}$.

Cohen and Mullen~\cite{CohMul} considered~(\ref{e:vc}) in its most general form, namely $b=1$ and
arbitrary non-zero $c$ and~$a$. Cohen~\cite{Cohen1993} calls this `Conjecture~G'. Cohen and Mullen
proved Conjecture~G for all $q \geq 4.79\times 10^{8}$; Cohen~\cite{Cohen1993} proved it for all
$q\geq 3.854\times 10^{7}$, and states that it is true for even~$q>4$; it is false for $q=4$.
Chou, Mullen, Shiue and Sun~\cite{Chou}
tested it for odd $q<2130$ and found that it failed only for $q=3$, $5$, $7$, $11$, $13$, $19$,
$31$, $43$, and $61$.  Thus, in effect, Conjecture~G can be interpreted as claiming that all prime
powers exceeding $61$ lie in the set~$\mathcal{G}$. What this means is that `all' one needs to do
is to check~(\ref{e:vc}) for $2130\leq q \leq 3.854\times 10^{7}$.

We improve on Cohen's method, given in~\cite{Cohen1993}, to isolate easily the possible
counterexamples to Conjecture~G. We compile an initial list of values of $q$ that may need
checking. We then examine this list in more detail, sieving out some values of~$q$. This produces
a secondary list of only $777$ values of $q$ with $2131\leq q \leq 2762761$. This list is
just small enough to enable us to verify~(\ref{e:vc}) for each~$q$. The result is:

\begin{thm}\label{t:thm1}
  For $q>61$ and for arbitrary non-zero elements $a$, $b$, $c$ of $\mathbb{F}_{q}$, there is
  always one representation of the form
  $$
    a = bg_{n} + c g_{m},
  $$
  where $g_{n}$ and $g_{m}$ are primitive roots of $\mathbb{F}_{q}$.
\end{thm}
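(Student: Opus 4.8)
The plan is to reduce Conjecture~G to a finite computation via a character-sum bound, and then dispatch the surviving finite list directly. First I would normalise as in the introduction, taking $b=1$, so that the task is to show that for every $q>61$ and all non-zero $a,c\in\mathbb F_q$ there is a primitive root $g$ with $a-cg$ also primitive. The standard tool is the indicator function for primitive roots: for $x\in\mathbb F_q^\ast$,
$$
  \varpi(x)=\frac{\phi(q-1)}{q-1}\sum_{d\mid q-1}\frac{\mu(d)}{\phi(d)}\sum_{\chi_d}\chi_d(x),
$$
where the inner sum runs over the $\phi(d)$ multiplicative characters of order exactly $d$. Writing $N$ for the number of $g\in\mathbb F_q^\ast$ such that both $g$ and $a-cg$ are primitive, one expands
$$
  N=\sum_{g\in\mathbb F_q^\ast}\varpi(g)\,\varpi(a-cg)
   =\Bigl(\tfrac{\phi(q-1)}{q-1}\Bigr)^{2}\sum_{d_1\mid q-1}\sum_{d_2\mid q-1}\frac{\mu(d_1)\mu(d_2)}{\phi(d_1)\phi(d_2)}\sum_{\chi_{d_1},\chi_{d_2}}S(\chi_{d_1},\chi_{d_2}),
$$
with $S(\chi_{d_1},\chi_{d_2})=\sum_{g}\chi_{d_1}(g)\chi_{d_2}(a-cg)$. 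The principal-character term ($d_1=d_2=1$) contributes the main term $(q-2)(\phi(q-1)/(q-1))^2$ (the $-2$ from excluding $g=0$ and $g=a/c$), and every mixed term is a Jacobi-type sum bounded by $\sqrt q$ in absolute value by Weil. Collecting these gives $N>0$ provided
$$
  q-2 > (W(q-1)-1)^{2}\sqrt q,
$$
where $W(m)$ denotes the number of squarefree divisors of $m$; equivalently $\sqrt q > (W(q-1)-1)^2 + \text{(lower order)}$. This is the crude criterion, and it already holds for all but finitely many $q$.

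The next step is to improve this crude bound, since $(W(q-1)-1)^2\sqrt q$ is too weak to leave a computationally tractable list. Here I would use the sieving device (the ``$p$-free'' or Cohen–Huczynska style sieve): fix a few small primes $p_1,\dots,p_s$ dividing $q-1$, handle divisibility by the remaining prime divisors via the full character sum, and bound the contribution of the $p_i$ by a separate, cheaper estimate. Concretely, writing $q-1 = p_1^{a_1}\cdots p_r^{a_r}$ and choosing a ``core'' $e$ (product of some of the $p_i$), one obtains a criterion of the shape
$$
  N>0\quad\text{whenever}\quad \sqrt q > \bigl(s-1\bigr)\delta^{-1}\,W(e) + \text{(small correction)},\qquad \delta = 1-\sum_{i: p_i\nmid e}\frac{2}{p_i},
$$
so that only the few largest prime factors of $q-1$ enter the leading term. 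Combined with the fact that $W(q-1)$ is small unless $q-1$ has many small prime factors, this reduces the set of potential exceptions to an explicit finite list — the paper describes obtaining first an initial list and then, after a finer sieve, a secondary list of $777$ values of $q$ with $2131\le q\le 2762761$.

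Finally, for each $q$ on that secondary list one verifies \eqref{e:vc} directly: enumerate the primitive roots of $\mathbb F_q$ (a genuinely finite task once $q$ is fixed), and for each pair $(a,c)$ of non-zero elements — or, exploiting the $a/c$ reduction, for each non-zero ratio — check that some primitive root $g$ makes $a-cg$ primitive. Since the $q$ in question are at most a few million, this is feasible, and the known small exceptions ($q=3,5,7,11,13,19,31,43,61$) are exactly those for which it fails. I expect the main obstacle to be the middle step: extracting a sieve inequality sharp enough that the resulting list is small enough to search exhaustively, which requires carefully optimising the choice of sieving primes against the distribution of $W(q-1)$ and tracking the lower-order terms honestly rather than with the crude Weil bound alone.
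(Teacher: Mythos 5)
Your plan is essentially the paper's own proof: the same character-sum expansion with the principal term $q-2$ and Jacobi-type sums bounded by $\sqrt q$, the same Cohen--Huczynska-style sieve with $\delta=1-2\sum_i p_i^{-1}$ over the primes dividing $q-1$ but not the kernel $e$, and the same reduction to a finite list (the $777$ values up to $2762761$) finished by direct machine verification. The only slips are cosmetic and fall within the ``lower-order'' terms you left unspecified: the paper's precise sieve criterion is $q>\bigl(\frac{2s-1}{\delta}+2\bigr)^{2}W(e)^{4}$, i.e.\ $W(e)^{2}$ and $2s-1$ rather than your $W(e)$ and $s-1$, and the crude bound involves $W(q-1)^{2}-1$ character pairs rather than $(W(q-1)-1)^{2}$.
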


\section{Theory}

Let $\omega(n)$ denote the number of distinct prime factors of $n$ so that $W(n)=2^{\omega(n)}$ is the
number of square-free divisors of~$n$. Also, let $\theta(n)=\prod_{p|n}(1-p^{-1})$. From the
working of~\cite{Cohen1993} (see also~\cite{CohMul}) one can conclude that a prime power
$q \in \mathcal{G}$ if $q > W(q-1)^4$ and hence if $\omega(q-1) \geq 16$ or $q>2^{60}$. More
significantly, a sieving method was given yielding improved lower bounds for $q$ guaranteeing
membership of~$\mathcal{G}$. Instead of $W(q-1)$, these depend on appropriate choices of divisors
$e_1,e_2$ of $q-1$ and the quantities $W(e_i)$ and $\theta_{e_i}$, $i=1,2$, and can be applied to
successively smaller values of $\omega(q-1) \leq 15$. In particular, it was shown that, if
$\omega(q-1) \geq 9$, then $q \in\mathcal{G}$. Further, for each value of $\omega(q-1) \leq 8$ an upper
bound can be derived on the set of prime power values requiring further analysis.

It turns out that the lists of possible exceptions that are thereby obtained from the method
of~\cite{Cohen1993} are small enough for direct computer verification on contemporary computer
hardware. However, we can do better, as we now proceed to show. For any integer $n$ define its
radical $\mathrm{Rad}(n)$ as the product of all distinct prime factors of~$n$. In the appendix we prove

\begin{thm}\label{t:Cohen}
  Let $q\geq 4 $ be a prime power. Let $e$ be a divisor of $q-1$.  If $\mathrm{Rad}(e)=\mathrm{Rad}(q-1)$
  set $s=0$ and $\delta=1$.  Otherwise,  let $p_1,\ldots,p_s$, $s\geq 1$, be the primes dividing
  $q-1$ but not~$e$ and set $\delta=1-2\sum_{i=1}^s p_i^{-1}$. Suppose that $\delta$
  is positive and that
  \begin{equation}\label{e:Cohen}
    q>\left(\frac{2s-1}{\delta}+2\right)^2 W(e)^4.
  \end{equation}
  Then $q\in\mathcal{G}$.
\end{thm}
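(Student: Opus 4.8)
The plan is to reduce Theorem~\ref{t:Cohen} to a character-sum estimate of the kind worked out in~\cite{Cohen1993}. Fix non-zero $a,c\in\mathbb{F}_q$ and let $N(a,c)$ be the number of $g\in\mathbb{F}_q$ for which both $g$ and $a-cg$ are primitive roots of $\mathbb{F}_q$; it suffices to show $N(a,c)>0$ under~(\ref{e:Cohen}). For a divisor $f$ of $q-1$ let $\varrho_f$ be the indicator function of the \emph{$f$-free} elements of $\mathbb{F}_q^*$ (those that are not an $\ell$-th power for any prime $\ell\mid f$); it has the standard expansion
\[
  \varrho_f(x)=\theta(f)\sum_{d\mid f}\frac{\mu(d)}{\phi(d)}\sum_{\mathrm{ord}\,\chi=d}\chi(x)
\]
over multiplicative characters of $\mathbb{F}_q^*$. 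Since $p_1,\dots,p_s$ are exactly the primes dividing $q-1$ but not $e$, an element is a primitive root iff it is $e$-free and $(ep_i)$-free for each $i$; when $s=0$ this just says it is $e$-free. This identifies $\varrho_{q-1}$ with $\varrho_{\mathrm{Rad}(q-1)}$ and reduces everything to the divisors $e,ep_1,\dots,ep_s$.

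The first step is a sieving (Bonferroni-type) inequality: for all $x,y\in\mathbb{F}_q^*$,
\[
  \varrho_{q-1}(x)\,\varrho_{q-1}(y)\ \ge\ \sum_{i=1}^s\bigl(\varrho_{ep_i}(x)\varrho_e(y)+\varrho_e(x)\varrho_{ep_i}(y)\bigr)-(2s-1)\,\varrho_e(x)\,\varrho_e(y),
\]
which one checks by running through the finitely many patterns of which of the conditions ``$(ep_i)$-free'' hold at $x$ and at $y$ (it is an equality when $s=0$). Summing over $g$ with $x=g$, $y=a-cg$ gives
\[
  N(a,c)\ \ge\ \sum_{i=1}^s\bigl(S(ep_i,e)+S(e,ep_i)\bigr)-(2s-1)\,S(e,e),\qquad S(e_1,e_2):=\sum_{g\in\mathbb{F}_q}\varrho_{e_1}(g)\,\varrho_{e_2}(a-cg).
\]

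Next I would evaluate $S(e_1,e_2)$ by expanding both indicators and computing $T(\chi_1,\chi_2):=\sum_{g}\chi_1(g)\chi_2(a-cg)$. For $\chi_1=\chi_2$ trivial this is $q-2$; if exactly one of $\chi_1,\chi_2$ is trivial, $T$ is a complete character sum along a line minus a point, so $|T|\le1$; and if both are non-trivial, the substitution $u=cg/a$ turns $T$ into $\chi_1(a/c)\chi_2(a)$ times a Jacobi sum, whence $|T|\le q^{1/2}$ by Weil's theorem. Collecting terms, $S(e_1,e_2)=\theta(e_1)\theta(e_2)(q-2)+O\!\bigl(\theta(e_1)\theta(e_2)W(e_1)W(e_2)q^{1/2}\bigr)$; more usefully, isolating those characters whose order is a multiple of $p_i$ yields $S(ep_i,e)=(1-p_i^{-1})S(e,e)+O\!\bigl(\theta(e)^2(1-p_i^{-1})W(e)^2q^{1/2}\bigr)$, and likewise for $S(e,ep_i)$.

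Substituting these into the sieve bound, the $S(e,e)$ terms combine with total coefficient $2\sum_{i=1}^s(1-p_i^{-1})-(2s-1)=\delta$, so $N(a,c)\ge\delta\,\theta(e)^2(q-2)-(\text{error})$, where the error, after using $\sum_i(1-p_i^{-1})=s-\tfrac{1-\delta}{2}$, is bounded by $(2s-1+2\delta)\,\theta(e)^2W(e)^2q^{1/2}$ up to lower-order terms. Thus the hypothesis $\delta>0$ is precisely what keeps the main term positive, and $N(a,c)>0$ as soon as $q^{1/2}>\bigl(\tfrac{2s-1}{\delta}+2\bigr)W(e)^2$, i.e.\ as soon as~(\ref{e:Cohen}) holds. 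The main obstacle is this final bookkeeping: one has to carry the genuinely $O(W(e)^2)$ (i.e.\ $q^{1/2}$-free) contributions and the signs in the sieve sum carefully enough to recover exactly the constant $\tfrac{2s-1}{\delta}+2$ rather than a weaker one, and to dispose cleanly of the excluded points $g=0$ and $g=a/c$ throughout. For $s=0$ the sieve is vacuous, $\delta=1$, and the estimate reduces to the bound $q>W(q-1)^4$ recalled earlier.
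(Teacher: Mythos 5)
Your proposal is correct and follows essentially the same route as the paper: your pointwise Bonferroni inequality summed over $g$ is exactly Lemma~\ref{sieve}, your treatment of $S(e_1,e_2)$ via the character expansion, the substitution to a Jacobi sum and the Weil bound reproduces the identity~(\ref{jac}) and Lemma~\ref{t:N}, and the bookkeeping $2\sum_{i}(1-p_i^{-1})-(2s-1)=\delta$ gives precisely the constant $\frac{2s-1}{\delta}+2$ in~(\ref{e:Cohen}). The only cosmetic difference is your sharper bound $|T|\le 1$ when exactly one character is trivial, which the paper does not need since absorbing the $q-2$ and the $\sqrt{q}$ terms as in~(\ref{Neq1}) (using $q\ge 4$) already yields the stated constant.
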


As an example of the usefulness of this theorem, consider the case
$\omega(q-1)=8$. For $s=5$ we have $W(e)=8$ and
$\delta\geq 1-2\bigl(\frac{1}{7}+\frac{1}{11}+\frac{1}{13}+\frac{1}{17}+\frac{1}{19}\bigr)$.
Therefore the right hand side of~(\ref{e:Cohen}) is at most $14647129.006$, and so $q>14647129$
guarantees membership in~$\mathcal{G}$ when $\omega(q-1)=8$. Moreover, up to $14647129$ there is only
one prime power with $\omega(q-1)=8$, namely
$q=13123111=2\cdot 3\cdot 5\cdot 7\cdot 11\cdot 13\cdot 19\cdot 23+1$. Using again
Theorem~\ref{t:Cohen}, still with $s=5$ but this time with
$\delta=1-2\bigl(\frac{1}{7}+\frac{1}{11}+\frac{1}{13}+\frac{1}{19}+\frac{1}{23}\bigr)$ tailored
to the specific value of~$q$, allows us to conclude that $13123111\in\mathcal{G}$.

We repeat the procedure for $1\leq \omega(q-1)\leq 7$, and $q\geq 2130$, each time noting the upper
bound of the intervals that need further analysis. These are reported in the second column in
Table~\ref{t:bounds}. We then enumerate all possible $q$ and eliminate as many values as we can
by checking if~(\ref{e:Cohen}) is true for some value of~$s$. We are left with a final list of
values of $q$ that need checking. Column~3 of Table~\ref{t:bounds} contains the largest elements
of these lists; Columns~4 and~5 contain respectively the initial and final number of elements of
these lists, discriminating primes (on the left of the summation sign) and prime powers (on the
right of the summation sign).

\begin{table}[ht]
  \caption{Improved bounds for $q$}
  \label{t:bounds}
  \centering
  \begin{tabular}{ccccc}
    \hline\hline
    $\omega(q-1)$ & Upper bound & Largest $q$ & Initial list size & Final list size \\
    \hline
    $8$ & $14647129$ &         --- &    $1+0$  &   $0+0$  \\
    $7$ &  $3402711$ &   $2762761$ &   $78+1$  &  $22+1$  \\
    $6$ &   $947062$ &    $840841$ &  $635+6$  & $162+4$  \\
    $5$ &   $238715$ &    $231001$ & $1741+21$ & $290+9$  \\
    $4$ &    $34124$ &     $33601$ & $1024+24$ & $259+10$ \\
    $3$ &     $3441$ &      $4057$ &   $84+5$  &  $16+4$  \\
    \hline\hline
  \end{tabular}
\end{table}

It is worthwhile to mention that the largest value that needs to be verified ($2762761$) is
considerably smaller than the largest value that, according to~\cite{Cohen1993}, would have to
be verified ($25555531$).

\section{Computation}\label{sec3}

Let $q=p^k$, where $p$ is a prime. When $k=1$ the full finite field machinery is not needed to
test Conjecture~G. For efficiency reasons, we thus developed two programs to test it: one
to deal with the case $k=1$, and another to deal with the case $k>1$.

\subsection{\boldmath Verification of Conjecture~G when $q=p$}

One way to verify Conjecture~G for a given value of $p$ is to call
Algorithm~\ref{a:p1} (or Algorithm~\ref{a:p2}) for $c=1,2,\ldots,p-1$. If it returns success in
all cases then Conjecture~G is true. Otherwise it is false.

\begin{algorithm}[H]
  \DontPrintSemicolon
  \caption{Verification that for every non-zero element $a$ of $\mathbb{F}_p$ there exist two
           primitive roots also of $\mathbb{F}_p$, $g_m$ and $g_n$, such that $a=g_n+cg_m$.
           \label{a:p1}}
  Set $t_0$ to $1$, set $t_1,t_2,\ldots,t_{p-1}$ to $0$, and set $r$ to $p-1$ \;
  \For{$m=1,2,\ldots,\phi(p-1)$}
  {
    Set $d$ to $cg_m$ \;
    \For{$n=1,2,\ldots,\phi(p-1)$}
    {
      Set $a$ to $g_n+d$ \;
      \lIf{$t_a$ is equal to $0$}{set $t_a$ to $1$ and decrease $r$}
    }
    \lIf{$r$ is equal to $0$}{terminate with success}
  }
  Terminate with failure \;
\end{algorithm}

If at some point during the execution of Algorithm~\ref{a:p1} $t_a$ is equal to $0$ then
no solution to $a=g_n+cg_m$ was found up to that point. Note that $r$ counts the number of
$t_a\!$'s that are still equal to~$0$. For small values of $m$, Algorithm~\ref{a:p1} is
quite efficient at discarding values of~$a$, but when $r$ starts to become much smaller than
$\phi(p-1)$ it becomes inefficient. Algorithm~\ref{a:p2} handles small values of $r$ better,
but is less efficient than Algorithm~\ref{a:p1} when $r$ is large.

\begin{algorithm}[H]
  \DontPrintSemicolon
  \caption{Verification that for every non-zero element $a$ of $\mathbb{F}_p$ there exist two
           primitive roots also of $\mathbb{F}_p$, $g_m$ and $g_n$, such that $a=g_n+cg_m$.
           \label{a:p2}}
  Set $a_1$ to $1$, $a_2$ to $2$, $\ldots\,$, $a_{p-1}$ to $p-1$, and set $r$ to $p-1$ \;
  \For{$m=1,2,\ldots,\phi(p-1)$}
  {
    Set $i$ to $1$ and set $d$ to $cg_m$ \;
    \While{$i\leq r$}
    {
      Set $j$ to $a_i-d$ \;
      \eIf{$j$ is a primitive root}{Set $a_i$ to $a_r$ and decrease $r$}{Increase $i$}
    }
    \lIf{$r$ is equal to $0$}{terminate with success}
  }
  Terminate with failure \;
\end{algorithm}

Note that $a_1,\ldots,a_r$ hold the $r$ values of $a$ for which no solution of $a=g_n+cg_m$ has
yet been found. It is quite easy to switch from the first algorithm to the second and
\textit{vice versa} at the point where a new value of $m$ is to be considered. We leave the
easy details about how to do this for the reader to amuse herself/himself. In our implementation
of these algorithms, we switch from the first to the second as soon as $r$ drops below
$0.25\,\phi(p-1)$. For our range of values of $p$, the hybrid algorithm has an execution time that
is very nearly proportional to~$p$.

The verification of Conjecture~G for a given value of $p$ can be done easily in parallel by
assigning different ranges of values of $c$ (a work unit) to each of the available processor
cores. This was done for the $749$ prime values of $q$ that, according to Table~\ref{t:bounds} had
to be tested. Using an Intel Core 2 Duo E8400 processor running at $3.0$~GHz this took about
$12.4$ one-core days. In all cases it was found that~$q\in\mathcal{G}$. A second run of the
program, on an Intel Core i5-2400 processor running at $3.1$~GHz, produced exactly the same
results for each work unit, with the obvious exception of execution times, and required $13.4$
one-core days. (This second run was slower due to data cache effects.) For each work unit we
recorded the number of times the hybrid algorithm terminated with a given value of $m$ and we
computed a $32$-bit cyclic redundancy checksum that depended on the values of some variables at
key points of the hybrid algorithm.

For our first run, it took approximately $1.3\times10^{-8}\,p^2$ seconds to
test Conjecture~G for a given value of $p$. For $p$ above $10^3$ our
algorithm terminated with success for an average value of $m$ that was close to
$\log\bigl(\frac{1}{2p}\bigr)/\log\bigl(1-\frac{\phi(p-1)}{p}\bigr)$.

To double-check the results of~\cite{Chou}, we also ran our programs for all primes up to~$2130$.
As expected, we found that Conjecture~G is false only for $q=3$, $q=5$, $q=7$, $q=11$, $q=13$,
$q=19$, $q=31$, $q=43$, and $q=61$.

\smallskip
While the two runs of the program were underway, we found a way to share most of the work needed
to test several values of $c$, thus giving rise to a much more efficient program. The key to this
improvement is the observation that in~(\ref{e:vc}) $c$ appears multiplied by $g_m$. Thus, if
instead of iterating on $m$ on the outer loops of Algorithms~\ref{a:p1} and \ref{a:p2} we iterate
on carefully chosen values of the product $cg_m$, which we denote by $d$, then it becomes possible
to exclude the same value of $a$ simultaneously for several different~$c$'s, also carefully
chosen.

To explain how this is done, let $g$ be one primitive root of~$\mathbb{F}_p$, let $u$ be the
largest non-repeated prime factor of $p-1$, and let $v=(p-1)/u$. The set
$G=\{\,g^{1+iv}\,\}_{i=0}^{u-1}$ contains exactly $u-1$ primitive roots and exactly one
non-primitive root, which will be denoted by~$z$. Since $g$ is a primitive root the sets
$C_o=\{\,g^{o+iv}\,\}_{i=0}^{u-1}$, $0\leq o\leq v-1$, are pairwise disjoint and their union is
the set of the non-zero elements of~$\mathbb{F}_p$. Moreover, the set formed by the products of
one member of $G$ and one member of $C_o$ is $C_{o+1}$ (note that $C_p=C_0$). Let $C'_o$ be a non
empty proper subset of~$C_o$, and let $C''_{o+1}$ be the corresponding subset of $C_{o+1}$ whose
members are obtained by multiplying the members of $C'_o$ by the non-primitive root~$z$. To test
simultaneously the values of $c$ belonging to $C'_o$, we use as values of $d$ the complement of
$C''_{o+1}$, i.e., the set $C_{o+1}-C''_{o+1}$. This ensures, for every $c\in C'_o$, that $d$ is
the product of $c$ and a primitive root of~$G$. These observations give rise to
Algorithm~\ref{a:p3}.

\begin{algorithm}[H]
  \DontPrintSemicolon
  \caption{Efficient verification that for $c\in C'_o$ and for every non-zero element $a$ of
           $\mathbb{F}_p$ there exist two primitive roots also of $\mathbb{F}_p$, $g_m$ and $g_n$,
           such that $a=g_n+cg_m$.
           \label{a:p3}}
  Set $t_0$ to $1$, set $t_1,t_2,\ldots,t_{p-1}$ to $0$, and set $r$ to $p-1$ \;
  \For{$d$ belonging to the complement of $C''_{o+1}$}
  {
    \For{$n=1,2,\ldots,\phi(p-1)$}
    {
      Set $a$ to $g_n+d$ \;
      \lIf{$t_a$ is equal to $0$}{set $t_a$ to $1$ and decrease $r$}
    }
    \lIf{$r$ is equal to $0$}{terminate with success}
  }
  \For{$c\in C'_o$}
  {
    Run Algorithm~\ref{a:p1} with a copy of the $t_a$ and $r$ variables, beginning it at
      line~$2$ \;
    Terminate with failure if Algorithm~\ref{a:p1} failed \;
  }
  Terminate with success \;
\end{algorithm}

To test Conjecture~G for $c\in C_o$ it is obviously necessary to run Algorithm~\ref{a:p3}
twice: once for $C'_o$ and once more with $C'_o$ replaced by its complement. For that reason, to
make the entire testing effort more efficient, $C'_o$ and its complement should have approximately
the same number of members (as $u$ is usually odd, one should have one more member than the
other). As before, the verification of Conjecture~G for a given value of $p$ can easily be
done in parallel, now by assigning different ranges of values of $o$ to each of the available
processor cores. This was done for the $749$ primes values of $q$ that had to be tested. Taking only $2.7$ one-core days plus $1.9$ days for double-checking, this computation
confirmed the results of our first two runs.

\subsection{\boldmath Verification of Conjecture~G when $q=p^k$}

We have chosen to represent a generic element $a$ of $\mathbb{F}_q$ by the polynomial
$\sum_{i=0}^{k-1} a_k x^k$ of formal degree $k-1$ with coefficients in $\mathbb{F}_p$, and
henceforth to do multiplications in $\mathbb{F}_q$ using polynomial arithmetic modulo a monic
irreducible polynomial of degree~$k$. Since all finite fields with $q$ elements are isomorphic to
each other, any irreducible polynomial will do. Since we also need a primitive root, instead of
finding first an irreducible polynomial and then finding a primitive root for that particular
model of the finite field, we fix the primitive root (for convenience we have chosen $g=x$), and
then find a monic polynomial of degree $k$ for which $g^{q-1}=1$ and for which $g^{(q-1)/f}\neq 1$
for each prime factor $f$ of~$q-1$. This ensures that the polynomial is indeed primitive and that
$g$ is one of its primitive roots.

Although the arithmetic operations are different when $q=p^k$, the main ideas of the three
algorithms presented above remain valid. In all three algorithms it is necessary to replace $p$
by~$q$. In addition, in Algorithm~\ref{a:p1} it is necessary to replace in line~$6$ $t_a$ by
$t_{a'}$, where $a'$ is the value of the polynomial that represents $a$ for $x=p$, because $a$ was
being used there as an index. Likewise for Algorithm~\ref{a:p3} in line~$5$. In
Algorithm~\ref{a:p2} it is necessary to replace the way the $a_j$ variables are initialised, since these are now 
polynomials with coefficients $a_{ji}$.

Using Algorithm~\ref{a:p2}, it took $7.0$~days on a single core of a $2.8$~GHz processor, plus
$11.9$ days to double-check the results on a slower processor, to check the conjecture for the
$28$ prime powers that had to be tested. In all cases it was found that~$q\in\mathcal{G}$.

Finally, to double check the results of~\cite{Chou}, we also ran our programs for all prime powers
up to~$2130$. As expected, we found that Conjecture~G is false only for $q=4$.

\appendix

\section{Proof of Theorem~\ref{t:Cohen}}

Sieving methods for problems involving primitive roots have been refined since those described in
\cite{CohMul} and \cite{Cohen1993} were formulated. A recent illustrative example occurs
in ~\cite{CohHuc} and is a model for the line of argument pursued here.

Throughout, suppose that $a$ and $b$ are arbitrary given non-zero members of $\mathbb{F}_{q}$.
For any $g \in \mathbb{F}_{q}$ set $a-cg=g^*$.

Let $e$ be a divisor of $q-1$. Call $g \in \mathbb{F}_{q}$ \emph{$e$-free} if $g \neq 0$ and
$g = h^d$, where $h \in \mathbb{F}_{q}$ and $d|e$, implies $d=1$. The notion of $e$-free depends
(among divisors of $q-1$) only on $\mathrm{Rad}(e)$. Moreover, in this terminology a primitive
root of $\mathbb{F}_{q}$ is a $(q-1)$-free element.
Next, given divisors $e_1, e_2$ of $q-1$, define $N(e_1,e_2)$ to be the number of
$g \in \mathbb{F}_{q}$ such that $g$ is $e_1$-free and $g^*$ is $e_2$-free. In order to show that
a prime power $q \in \mathcal{G}$ we have to show that $N(q-1,q-1)$ is positive (for every choice
of $a$  and $c$). The value of $N(e_1,e_2)$ can be expressed explicitly in terms of Jacobi sums over
$\mathbb{F}_{q}$ as follows. We have
\begin{equation}\label{jac}
  N(e_1,e_2)= \theta(e_1)\theta(e_2) \int_{d_1|e_1} \int_{d_2|e_2}
    \chi_{d_1}(1/c)(\chi_{d_1}\chi_{d_2})(a)J(\chi_{d_1},\chi_{d_2}).
\end{equation}
Here, for a divisor $e$ of $q-1$,
\[
  \int_{d|e} = \sum_{d|e}\frac{\mu(d)}{\phi(d)} \sum_{\chi_{d}},
\]
where the sum over $\chi_{d}$ is the sum over all $\phi(d)$ multiplicative characters $\chi_d$ of
$\mathbb{F}_{q}$ of exact order $d$, and $J(\chi_{d_1},\chi_{d_2})$ is the Jacobi sum
$\sum_{g \in \mathbb{F}_{q}} \chi_{d_1}(g)\chi_{d_2}(1-g)$.
(All multiplicative characters on $\mathbb{F}_q$ by convention take the value $0$ at $0$.)

Next, we present a combinatorial sieve. Let $e$ be a divisor of $q-1$. In practice, this
\emph{kernel} $e$ will be chosen such that  $\mathrm{Rad}(e)$ is the product of the smallest
primes in $q-1$.  Use the notation of   Theorem~\ref{t:Cohen}. In particular, if $\mathrm{Rad(e)} < \mathrm{Rad}(q-1)$
 let $p_1, \ldots, p_s$, $s \geq 1$, be the primes dividing $q-1$ but not $e$ and
 set $\delta=1-\sum_{i=1}^s 2p_i^{-1}$.
In practice, it is essential to choose $e$ so that $\delta >0$.
\begin{lem}\label{sieve}
  Suppose $e$ is a divisor of $q-1$. Then, in the above notation,
  \begin{equation}\label{sieveeq1}
    N(q-1,q-1) \geq \sum_{i=1}^sN(p_ie,e)+ \sum_{i=1}^sN(e,p_ie)-(2s-1)N(e,e).
  \end{equation}
  Hence
  \begin{equation}\label{sieveeq2}
    N(q-1,q-1) \geq \sum_{i=1}^s\{[N(p_ie,e)-\theta(p_i)N(e,e)]+ [N(e,p_ie)-\theta(p_i)N(e,e)]\}+
    \delta N(e,e).
  \end{equation}
\end{lem}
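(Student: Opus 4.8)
The plan is to derive (\ref{sieveeq1}) from a standard Bonferroni-type sieve inequality applied to the notion of $e$-freeness, and then to obtain (\ref{sieveeq2}) from it by a purely algebraic rearrangement using $\theta(p_i)=1-p_i^{-1}$ together with the definition of $\delta$.

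First I would record the key characterisation. Since $e$-freeness depends only on $\mathrm{Rad}(e)$, and $q-1$ and $e\,p_1\cdots p_s$ have the same radical, an element $g\in\mathbb{F}_q$ is $(q-1)$-free if and only if it is $e$-free and, for each $i=1,\dots,s$, it is also $p_ie$-free (equivalently, $g$ is not a $p_i$-th power). Now let $g$ range over the set counted by $N(e,e)$, i.e.\ those $g$ for which both $g$ and $g^*=a-cg$ are $e$-free. For such $g$, being $(q-1)$-free means that none of the ``bad'' conditions ``$g$ is a $p_i$-th power'' ($i=1,\dots,s$) and ``$g^*$ is a $p_i$-th power'' ($i=1,\dots,s$) holds. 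Each bad set is easy to count inside the $N(e,e)$-set: the $g$ (with $g,g^*$ both $e$-free) for which $g$ is a $p_i$-th power form precisely the complement, within that set, of those that are in addition $p_ie$-free, so this set has cardinality $N(e,e)-N(p_ie,e)$; symmetrically the set where $g^*$ is a $p_i$-th power has cardinality $N(e,e)-N(e,p_ie)$. Subtracting the union of these $2s$ bad sets and using subadditivity of the counting measure,
\[
  N(q-1,q-1)\ \geq\ N(e,e)-\sum_{i=1}^s\bigl(N(e,e)-N(p_ie,e)\bigr)-\sum_{i=1}^s\bigl(N(e,e)-N(e,p_ie)\bigr),
\]
which rearranges to (\ref{sieveeq1}). (In the degenerate case $s=0$, i.e.\ $\mathrm{Rad}(e)=\mathrm{Rad}(q-1)$, there are no bad sets and $N(q-1,q-1)=N(e,e)$, which is consistent with both displays.)

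For (\ref{sieveeq2}) I would simply redistribute the coefficient $-(2s-1)$ of $N(e,e)$. Since $\theta(p_i)=1-p_i^{-1}$ and $\delta=1-2\sum_{i=1}^s p_i^{-1}$,
\[
  \sum_{i=1}^s\bigl(-2\theta(p_i)\bigr)+\delta=-2s+2\sum_{i=1}^s p_i^{-1}+1-2\sum_{i=1}^s p_i^{-1}=-(2s-1),
\]
so $-(2s-1)N(e,e)=-\sum_{i=1}^s\theta(p_i)N(e,e)-\sum_{i=1}^s\theta(p_i)N(e,e)+\delta N(e,e)$. Substituting this into (\ref{sieveeq1}) and pairing one copy of $\theta(p_i)N(e,e)$ with $N(p_ie,e)$ and the other with $N(e,p_ie)$ yields (\ref{sieveeq2}) immediately.

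The only real subtlety, as opposed to a genuine obstacle, is twofold: one must make sure that $p_ie$-freeness is a strict strengthening of $e$-freeness, which is exactly why $p_1,\dots,p_s$ are chosen to be the primes dividing $q-1$ but \emph{not} $e$; and one must note that the $2s$ bad conditions may overlap, so the sieve gives only an inequality (a Bonferroni bound) rather than an identity. No estimates on Jacobi sums are needed for Lemma~\ref{sieve} itself; the expression (\ref{jac}) and character-sum bounds enter only at the next stage, when (\ref{sieveeq2}) is converted into the explicit criterion (\ref{e:Cohen}).
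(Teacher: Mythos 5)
Your proof is correct and is essentially the paper's argument: your Bonferroni-type union bound over the $2s$ ``bad'' conditions within the set counted by $N(e,e)$ is the same pointwise counting that the paper phrases as the right side of~(\ref{sieveeq1}) contributing $1$ when $g$ and $g^*$ are both primitive and a non-positive integer otherwise. The passage to~(\ref{sieveeq2}) by redistributing $-(2s-1)$ using $\theta(p_i)=1-p_i^{-1}$ and the definition of $\delta$ likewise matches the paper's rearrangement.
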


\begin{proof} 
The various $N$ terms on the right side of~(\ref{sieveeq1}) can be regarded as counting
  functions on the set of $g \in \mathbb{F}_{q}$ for which both $g$ and $g*$ are $e$-free.
  In particular, $N(e,e)$ counts all such elements, whereas, for example, $N(p_ie,e), \ i \leq s$,
  counts only those for which additionally $g$ is $p_i$-free. Since $N(q-1,q-1)$ is the number of
  $e$-free elements $g$ for which $g$ and $g^*$ are both
  $p_i$-free for every $i \leq s$, we see that, for a given $e$-free $g \in \mathbb{F}_{q}$, the right side
  of~(\ref{sieveeq1}) clocks up $1$ if $g$ and $g^*$ are both primitive and otherwise contributes
  a non-positive (integral) quantity. This establishes (\ref{sieveeq1}).
  Since $\theta(p_i)=1-1/p_i$, the bound~(\ref{sieveeq2}) is deduced simply by rearranging the
  right side of~(\ref{sieveeq1}).
\end{proof}

\begin{lem}\label{t:N}
  Suppose that $q \geq 4$ is a prime power and $e$ is a divisor of $q-1$. Then
  \begin{equation}\label{Neq1}
    N(e,e) \geq \theta(e)^{2}(q-W(e)^2\sqrt{q}).
  \end{equation}
  Moreover, for any prime $l$ dividing $q-1$ but not $e$, we have
  \begin{equation}\label{Neq2}
    |N(le,e) -\theta(l)N(e,e)| \leq (1-1/l)\theta(e)^{2}W(e)^2 \sqrt{q}.
  \end{equation}
  and
  \begin{equation}\label{Neq3}
  |N(e,le) -\theta(l)N(e,e)| \leq (1-1/l)\theta(e)^{2}W(e)^2 \sqrt{q}.
  \end{equation}
\end{lem}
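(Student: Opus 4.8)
The plan is to derive all three bounds from the explicit Jacobi-sum expansion~(\ref{jac}) by isolating the trivial character terms and estimating the rest with the classical bound $|J(\chi_{d_1},\chi_{d_2})| \leq \sqrt{q}$ valid whenever $\chi_{d_1}$, $\chi_{d_2}$ and $\chi_{d_1}\chi_{d_2}$ are all nontrivial. First I would record the exact contributions of the degenerate terms in~(\ref{jac}): when $d_1 = d_2 = 1$ the Jacobi sum is $J(\chi_1,\chi_1) = q-2$ (subtracting the contributions of $g = 0$ and $g = 1$ from the naive count $q$), giving the main term $\theta(e)^2(q-2)$ for $N(e,e)$; when exactly one of $d_1,d_2$ equals $1$ the inner sum over the nontrivial character vanishes by orthogonality (here one uses that $a, 1/c \neq 0$, so the character values are genuine roots of unity summing to zero), so those terms drop out entirely; the remaining terms have $d_1, d_2 > 1$ with $d_i \mid e$, hence $\chi_{d_1}\chi_{d_2}$ has order dividing $e$ and is nontrivial unless $\chi_{d_2} = \bar\chi_{d_1}$, in which case $J(\chi_{d_1},\bar\chi_{d_1}) = -\chi_{d_1}(-1)$ has absolute value $1$.

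For~(\ref{Neq1}), I would bound the total error by $\theta(e)^2$ times the sum over $d_1,d_2 > 1$, $d_i \mid e$, of $\frac{1}{\phi(d_1)\phi(d_2)}\sum_{\chi_{d_1}}\sum_{\chi_{d_2}} |J(\chi_{d_1},\chi_{d_2})|$; since for each pair $(d_1,d_2)$ the number of character pairs is $\phi(d_1)\phi(d_2)$ and each Jacobi sum is at most $\sqrt{q}$ in modulus, each $(d_1,d_2)$ block contributes at most $\sqrt{q}$, and the number of such blocks (including $d_1 = 1$ or $d_2 = 1$, which only helps the inequality) is at most $W(e)^2 - 2W(e) + 1 \leq W(e)^2$; combined with separating the main term $\theta(e)^2(q-2)$ one gets $N(e,e) \geq \theta(e)^2(q - 2 - (W(e)^2 - 1)\sqrt{q})$, which I would then check implies~(\ref{Neq1}) — the slack $2 + (W(e)^2-1)\sqrt q$ against $W(e)^2\sqrt q$ needs $\sqrt q \geq 2$, i.e.\ $q \geq 4$, which is exactly the stated hypothesis.

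For~(\ref{Neq2}) and~(\ref{Neq3}), the observation is that $\theta(l)N(e,e)$ equals $\theta(l)$ times the $\int_{d_1|e}\int_{d_2|e}$ expansion, while $N(le,e)$ has the same shape but with $\int_{d_1|le}$; since $\theta(le) = \theta(l)\theta(e)$, forming the difference $N(le,e) - \theta(l)N(e,e)$ exactly cancels the $d_1 \mid e$ part and leaves only the terms with $l \mid d_1$, i.e.\ $d_1 = l d_1'$ with $d_1' \mid e$, each carrying a factor $\mu(l)/\phi(l) = -1/(l-1)$. Estimating those leftover terms by $|J| \leq \sqrt q$ and counting: there are $\phi(l d_1')\phi(d_2)$ characters and a factor $\frac{1}{\phi(l)\phi(d_1')\phi(d_2)}$, so since $\phi(ld_1') = \phi(l)\phi(d_1')$ when $l \nmid d_1'$ (automatic as $l \nmid e$), each $(d_1', d_2)$ block again contributes $\leq \sqrt q$, and there are at most $W(e)^2$ blocks — but one gains the extra factor: the sum is $\theta(le)^2 = \theta(l)^2\theta(e)^2$ out front while we want $(1-1/l)\theta(e)^2 = \theta(l)\theta(e)^2$, and the discrepancy $\theta(l)$ versus the number of blocks will need a slightly sharper count, so I would instead keep $\theta(l)^2 W(e)^2 \leq \theta(l) W(e)^2$ only after noting one can absorb a factor — more carefully, the $d_2 = 1$ terms vanish by orthogonality and the number of surviving blocks is $W(e)(W(e)-1)$, and $\theta(l)^2(W(e)^2 - W(e)) \leq (1-1/l)W(e)^2$ holds comfortably. \textbf{The main obstacle} I anticipate is precisely this bookkeeping of which $(d_1,d_2)$ blocks genuinely survive (vanishing-by-orthogonality of mixed trivial/nontrivial pairs, and the diagonal $d_2 = \bar d_1$ giving $|J| = 1$ rather than $\sqrt q$) so that the constant comes out as the clean $W(e)^2$ rather than something larger; getting the $q \geq 4$ threshold to suffice in~(\ref{Neq1}) is the place where the accounting must be done honestly rather than wastefully.
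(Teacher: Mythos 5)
Your overall route is the paper's: expand $N$ via~(\ref{jac}), extract the $(1,1)$ term $q-2$, bound every other square-free block by $\sqrt q$, and, for~(\ref{Neq2})--(\ref{Neq3}), note that subtracting $\theta(l)N(e,e)$ cancels the $d_1\mid e$ part exactly, leaving only blocks with $l\mid d_1$. Your derivation of~(\ref{Neq1}) is fine: the stated inequality $N(e,e)\ge\theta(e)^2\{q-2-(W(e)^2-1)\sqrt q\}$ and the check that $q\ge 4$ absorbs the slack are exactly the paper's. However, the auxiliary claim that the mixed pairs ($d_1=1$ or $d_2=1$) ``vanish by orthogonality'' is false: with the value-$0$-at-$0$ convention one has $J(\chi_{d_1},\chi_1)=-1$ for nontrivial $\chi_{d_1}$, so such a block equals $-\frac{\mu(d_1)}{\phi(d_1)}\sum_{\chi_{d_1}}\chi_{d_1}(a/c)$, and a sum over the characters of \emph{exact order} $d_1$ (not over all characters of the group) at a point that may well be $1$ (e.g.\ $a=c$) does not vanish --- it is merely bounded by $\phi(d_1)$, so the block is bounded by $1\le\sqrt q$, which is all you need and, in effect, all you use there.

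The genuine problem is in your bookkeeping for~(\ref{Neq2})/(\ref{Neq3}). By~(\ref{jac}) the prefactor of $N(le,e)$ is $\theta(le)\theta(e)=\theta(l)\theta(e)^2$, not $\theta(le)^2=\theta(l)^2\theta(e)^2$ as you write; indeed the exact cancellation of the $d_1\mid e$ part, which you correctly assert, is only possible because this prefactor equals $\theta(l)$ times the prefactor $\theta(e)^2$ of $N(e,e)$. Having miscomputed it, you try to rescue the constant by claiming the $d_2=1$ blocks vanish by orthogonality and counting only $W(e)(W(e)-1)$ blocks; as explained above that vanishing claim is false, so as written your justification of~(\ref{Neq2}) rests on an incorrect step. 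The repair is immediate and is precisely the paper's computation: with the correct prefactor $\theta(l)\theta(e)^2$, the surviving terms run over $d_1=ld_1'$ with $d_1'\mid e$ square-free and over square-free $d_2\mid e$, i.e.\ at most $W(e)\cdot W(e)$ blocks, each of absolute value at most $\sqrt q$ (here $\chi_{ld_1'}$ is automatically nontrivial since $l$ divides its order), giving $|N(le,e)-\theta(l)N(e,e)|\le\theta(l)\theta(e)^2W(e)^2\sqrt q=(1-1/l)\theta(e)^2W(e)^2\sqrt q$, and symmetrically for~(\ref{Neq3}). No sharper count, and no vanishing of mixed blocks, is needed.
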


\begin{proof}
Starting with the identity~(\ref{jac}) we use the fact that when $d_1=d_2=1$ (so that
  $\chi_{d_1} =\chi_{d_2}$ is the principal character of $\mathbb{F}_q$), then
  $\chi_{d_1}(1/c)(\chi_{d_1}\chi_{d_2})(a)J(\chi_{d_1},\chi_{d_2})=q-2$. For all other character
  pairs $(\chi_{d_1}, \chi_{d_2})$, as is well-known, this quantity has absolute value $\sqrt{q}$
  (at most). Because there are, for example, $\phi(d_1)$ characters $\chi_{d_1}$ of order $d_1$,
  when we take into account the implicit denominators $\phi(d_1)$ and the M\"{o}bius function
  within the integral notation, we obtain as an aggregate contribution to the right side
  of~(\ref{jac}) a quantity of absolute value at most $\sqrt{q}$ from each pair of
  \emph{square-free} divisors $d_1,d_2$ of $e$, except the pair (1,1). Hence
  $N(e,e) \geq \theta(e)^{2}\{q-2 -(W(e)^2-1) \sqrt{q}\}$, which yields~(\ref{Neq1}).

  Further, from~(\ref{jac}), since $\theta(le) =\theta(l)\theta(e)$,
  \[
    N(le,e)-\theta(l)N(e,e) = \theta(l)\theta(e)^2\int_{d_1|e}\int_{d_2|e}
      \chi_{ld_1}(1/c)(\chi_{ld_1}\chi_{d_2})(a)J(\chi_{ld_1},\chi_{d_2}).
  \]
  Hence,
  \[
    |N(le,e)-\theta(l)N(e,e)| \leq \theta(l)\theta(e)^2W(e)(W(le)- W(e)) \sqrt{q},
  \]
  which yields~(\ref{Neq2}), since $W(le) =2W(e)$. Similarly, (\ref{Neq3}) holds.
\end{proof}

We now complete the proof of Theorem~\ref{t:Cohen}.
\begin{proof}
  Assume $\delta>0$. From~(\ref{sieveeq2}) and Lemma~\ref{t:N}
  \begin{eqnarray*}
    N(q-1,q-1) & \geq & \theta(e)^2\left\{\delta(q-W(e)^2\sqrt{q})-
                        \sum_{i=1}^s 2\left(1-\frac{1}{p_i}\right)W(e)^2\sqrt{q}\right\} \\
               & =    & \delta \theta(e)^2\sqrt{q}
                        \left\{\sqrt{q}-W(e)^2-\left(\frac{2s-1}{\delta}+1\right)W(e)^2\right\}.
  \end{eqnarray*}
  The conclusion follows.
\end{proof}

\bibliographystyle{amsplain}
\bibliography{manuscript}

\end{document}